\documentclass[11pt]{amsart}
\usepackage{amscd,amssymb,graphics}
\usepackage{hyperref}
\usepackage{amsfonts}
\usepackage{amsmath}
\usepackage{enumerate}
\input xy
\xyoption{all}

\oddsidemargin 0.1875 in \evensidemargin 0.1875in
\textwidth 5.8 in 
\textheight 230mm \voffset=-4mm

\newtheorem{thm}{Theorem}[section]

\newtheorem{corol}[thm]{Corollary}
\newtheorem{lemma}[thm]{Lemma}

\theoremstyle{definition}
\newtheorem{defi}[thm]{Definition}

\theoremstyle{remark}
\newtheorem{remark}[thm]{Remark}
\newtheorem{example}[thm]{Example}

\newcommand{\ben}{\begin{enumerate}}
	\newcommand{\een}{\end{enumerate}}
\newcommand{\bit}{\begin{itemize}}
	\newcommand{\eit}{\end{itemize}}

\def\R {{\Bbb R}}

\def\N{{\Bbb N}}

\def\eps{{\varepsilon}}

\def\QED{\nobreak\quad\ifmmode\roman{Q.E.D.}\else{\rm Q.E.D.}\fi}

\def\sna{\scriptscriptstyle\mathcal{NA}}

\begin{document}

\title[]{Balanced and functionally balanced $P$-groups}

\author[]{Menachem Shlossberg}
\email{menachem.shlossberg@uniud.it}

\address{\hfill\break
	Dipartimento di Matematica e Informatica
	\hfill\break
	Universit\`{a} di Udine
	\hfill\break
	Via delle Scienze  206, 33100 Udine
	\hfill\break
	Italy}

\subjclass[2010]{54H11, 22A05, 54E15}
\keywords{Itzkowitz's problem, $P$-group, balanced group, (strongly) functionally balanced group.}

	\begin{abstract}
	In relation to  Itzkowitz's problem \cite{Itz91}, we show that a $\mathfrak c$-bounded $P$-group is balanced if and only if it is functionally balanced. We  prove that for an arbitrary $P$-group, being functionally balanced is equivalent to being strongly functionally balanced. A special focus is given to the uniform free topological group defined over a uniform $P$-space. In particular, we show that this group is (functionally) balanced precisely when its subsets $B_n,$ consisting of words of length at most $n,$ are all (resp., functionally) balanced.
	\end{abstract}
	\maketitle
	\section{Introduction and preliminaries}
	A topological group $G$ is {\it balanced} if its left and right uniformities coincide. Recall  that the left uniformity $\mathcal L_G$ of a topological group $G$ is formed by the sets $U_L:=\{(x,y)\in G^2: x^{-1}y\in U\},$ where $U$ is a neighborhood of the identity element of $G.$ The right uniformity $\mathcal R_G$ is defined analogously.
	A topological group $G$ is called {\it functionally balanced} \cite{PR91} in case every bounded left-uniformly continuous real-valued function on $G$ is also right-uniformly continuous. Omitting the term ``bounded" we obtain the definition of a {\it strongly functionally balanced} group. In the sequel we extend these definitions, in a natural way, to include  also the symmetric subsets of a topological group (see Definition \ref{def:bal}).  The question of whether every strongly functionally balanced group is balanced was raised by Itzkowitz \cite{Itz91}.   This longstanding problem is still open.
	
	 Nevertheless, it is known that a functionally balanced group is balanced  whenever $G$ is either  locally compact \cite{Itz91,MIL90,PR91}, metrizable \cite{PR91} or locally connected \cite{MNP97}. Recall that a topological group is {\it non-archimedean} if it has a local base at the identity consisting of open subgroups.
	A strongly functionally balanced non-archimedean group is balanced in case it is $\aleph_0$-bounded \cite{Her2000}  or strongly functionally generated by the set of all its subspaces of countable $o$-tightness \cite{TR04}. For more known results concerning  Itzkowitz's problem we refer the reader to the survey paper \cite{BT07}.
	
	The class of all non-archimedean groups contains the class of all $P$-groups  (see Definition \ref{def:pgr}). We prove that a $P$-group is functionally balanced if and only if it is strongly functionally balanced (Corollary \ref{cor:funbal}). This gives a positive answer to \cite[Question 3]{BT07} for $P$-groups. One of the main result we obtain is that a 
	$\mathfrak c$-bounded  $P$-group  is  balanced  if and only if it is functionally balanced (Theorem \ref{thm:pbal}). So, a negative solution to Itzkowitz's problem cannot be found in the class of $\mathfrak c$-bounded $P$-groups.  
	
	A uniform space that is closed under countable intersection is called a  {\it uniform $P$-space} (see also Definition \ref{def:pgr}). Such a  space is necessarily {\it non-archimedean},  which means that it possesses a    base of equivalence relations (Lemma \ref{lem:pisna}).  In Section \ref{sec:coi} we discuss the coincidence of some universal free objects over the same uniform $P$-space.

	For a free group $F(X)$, over a nonempty set $X$, we denote by $B_n$ its subset containing all words of length not greater than $n.$ In Section \ref{sec:last} we show that the uniform free topological group
	 $F(X,\mathcal U),$ over a uniform $P$-space,  is (functionally) balanced if and only if  $B_n$ is (resp., functionally) balanced for every $n\in \N$. Hopefully, this theorem  can be useful in providing a negative solution to Itzkowitz's problem.
	 
	 Given a symmetric  subset $B$ of a topological group $G$   we denote by 
	  $\mathcal L_G^B$ the trace of the left uniformity  $\mathcal L_G$ on $B.$ That is, $\eps\in  \mathcal L_G^B$ if and only if there exists $\delta\in \mathcal L_G$  such that $\delta\cap(B\times B)= \eps.$ The uniformity   $\mathcal R_G^B$ is  the trace of  $\mathcal R_G$.  In case $\{A_n\}_{n\in \N}$ is a countable collection of subsets of $G,$ we write $\mathcal L_G^n \ (\mathcal R_G^n)$ instead of $\mathcal L_G^{A_n}$ (resp., $\mathcal R_G^{A_n}$). The {\it character} of $G$ is the minimum cardinal of a local base at the identity. For a uniform space $(X,\mathcal U)$, the {\it weight}  $w(X,\mathcal U)$ denotes the minimal cardinality of a base of $(X,\mathcal U).$ For $\eps\in \mathcal U$ and $a\in X$ we let $\eps[a]:=\{x\in X: (a,x)\in \eps\}.$ All topological groups and uniform spaces in this
	  paper are assumed to be Hausdorff. Unless otherwise is stated the uniformity of a topological group $G$ is the two-sided uniformity, that is, the supremum $\mathcal L_G\vee \mathcal R_G.$
Finally, $\mathbf{TGr}, \mathbf{NA}$	and $\mathbf{NA_b}$  denote, respectively, the classes of all topological groups, non-archimedean groups and non-archimedean balanced groups.
	\section{$P$-groups and uniform $P$-spaces}
	\begin{defi}(see \cite{AT}, for example)\label{def:pgr}
		A   {\it $P$-space} is a topological space in which the intersection of countably many open sets is still open. A topological group which is a $P$-space is called 
	a   {\it $P$-group}.  A  {\it uniform $P$-space} $(X,\mathcal U)$ is a uniform space in which the intersection of countably many elements of $\mathcal U$   is again in $\mathcal U$.
	\end{defi}
	\begin{lemma}\cite[Lemma 4.4.1.a]{AT}\label{lem:na}
If $G$ is	a $P$-group, then $G$ is non-archimedean.
	\end{lemma}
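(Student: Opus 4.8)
The plan is to produce, inside an arbitrary neighborhood of the identity, an open subgroup; this exhibits a local base at the identity consisting of open subgroups, which is exactly non-archimedeanness. So fix a neighborhood $U$ of the identity $e$ of $G$. The goal is to find an open subgroup $V$ with $e\in V\subseteq U$.

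First I would carry out the standard ``dyadic square-root'' construction afforded by the continuity of the group operations. Using continuity of multiplication at $(e,e)$ together with continuity of inversion, one recursively selects symmetric open neighborhoods $U_n$ of $e$ (that is, $U_n=U_n^{-1}$, which can always be arranged by intersecting with the image under inversion) satisfying $U_1\subseteq U$ and $U_{n+1}U_{n+1}\subseteq U_n$ for every $n\in\N$. In particular the sequence is decreasing: $U_{n+1}\subseteq U_{n+1}U_{n+1}\subseteq U_n$.

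Next I would invoke the defining property of a $P$-group. Set
$$
V:=\bigcap_{n\in\N}U_n .
$$
Since each $U_n$ is open and $G$ is a $P$-space, this countable intersection is again open; moreover $e\in U_n$ for all $n$, so $V$ is an open neighborhood of $e$, and clearly $V\subseteq U_1\subseteq U$. It remains to check that $V$ is a subgroup. It contains $e$ and is symmetric, because each $U_n$ is symmetric, so $x\in V$ forces $x^{-1}\in U_n$ for all $n$, whence $x^{-1}\in V$. For closure under multiplication, take $x,y\in V$; then for each $n$ we have $x,y\in U_{n+1}$, so $xy\in U_{n+1}U_{n+1}\subseteq U_n$, and therefore $xy\in V$.

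Thus $V$ is an open subgroup of $G$ contained in $U$. As $U$ was an arbitrary neighborhood of $e$, the open subgroups of $G$ form a local base at the identity, so $G$ is non-archimedean. There is no genuine obstacle here: the only mildly delicate point is the bookkeeping in the recursive construction of the $U_n$, and the single essential idea is that the $P$-property upgrades the (otherwise merely $G_\delta$) intersection $\bigcap_n U_n$ to an \emph{open} set, at which point the condition $U_{n+1}U_{n+1}\subseteq U_n$ makes it automatically a subgroup.
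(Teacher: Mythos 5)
Your proof is correct, and it uses the same central idea as the paper (the $P$-property turns a countable intersection of neighborhoods into an open neighborhood), but the construction around that idea is organized differently. The paper does not build a nested chain: for each $n$ it independently picks a symmetric neighborhood $W_n$ with $W_n^n\subseteq U$, forms the open set $W=\bigcap_{n\in\N}W_n$, and then takes $H$ to be the subgroup \emph{generated} by $W$; the subgroup $H$ is open because it contains a neighborhood of $e$, and it lies in $U$ because any product of $m$ elements of $W$ lies in $W_m^m\subseteq U$. You instead arrange the recursive condition $U_{n+1}U_{n+1}\subseteq U_n$, which makes the intersection $V=\bigcap_{n\in\N}U_n$ \emph{already} a subgroup, so no generation step (and no control of arbitrary powers) is needed. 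The trade-off is where the work sits: your route spends it in the recursive ``square-root'' construction and gets the subgroup property for free at the end, while the paper's route makes the choices of $W_n$ completely independent of one another and pays for that with the extra (but easy) step of passing to the generated subgroup $\bigcup_m W^m$ and checking it stays inside $U$. Both arguments are equally valid and of essentially the same length.
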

	\begin{proof}
		Let $U$ be a neighborhood of the identity element $e.$ We have to show that $U$ contains an open subgroup $H.$
		For every $n\in \N$ there exists a symmetric neighborhood $W_n$ such that $W_n^n\subseteq U.$ Since $G$ is	a $P$-group the set $W=\cap_{n\in \N}W_n$ is a neighborhood of $e.$
		Let $H$  be the subgroup generated by $W.$ Clearly, $H$ is open and $H\subseteq U.$
		\end{proof}
	\begin{lemma}\label{lem:pisna} If $(X,\mathcal U)$ is a uniform $P$-space, then  $(X,\mathcal U)$ is non-archimedean.

	\end{lemma}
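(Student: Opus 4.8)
The plan is to verify directly that $\mathcal U$ has a base consisting of equivalence relations, which is exactly what it means for the uniform space to be non-archimedean. So I fix an arbitrary entourage $V\in\mathcal U$ and aim to produce an equivalence relation $E\in\mathcal U$ with $E\subseteq V$. The construction mirrors the proof of Lemma \ref{lem:na}, with the $P$-space hypothesis on $\mathcal U$ playing the role that the $P$-group hypothesis played there.

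First I recall the standard uniform-space fact that for every $U\in\mathcal U$ there is a symmetric entourage $W$ (that is, $W=W^{-1}$) with $W\circ W\subseteq U$: replacing $U$ by $U\cap U^{-1}$ makes it symmetric, the uniformity axioms yield some $W'$ with $W'\circ W'\subseteq U$, and $W:=W'\cap (W')^{-1}$ is then symmetric and still satisfies $W\circ W\subseteq U$. Iterating, I build recursively a sequence $(W_n)_{n\in\N}$ of symmetric entourages with $W_1\subseteq V$ and $W_{n+1}\circ W_{n+1}\subseteq W_n$ for all $n$.

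At this point the $P$-space assumption enters: because $\mathcal U$ is closed under countable intersection, the set $E:=\bigcap_{n\in\N}W_n$ is again a member of $\mathcal U$. Reflexivity and symmetry of $E$ are immediate, since every $W_n$ contains the diagonal and equals its own inverse, and both features survive intersection.

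The only point needing an argument, and hence the crux, is transitivity. For this I note that $E\subseteq W_{n+1}$ for each $n$, so that $E\circ E\subseteq W_{n+1}\circ W_{n+1}\subseteq W_n$; intersecting over all $n$ gives $E\circ E\subseteq\bigcap_{n}W_n=E$. Thus $E$ is an equivalence relation contained in $W_1\subseteq V$, as required. I do not expect a genuine obstacle here: the single halving condition $W_{n+1}\circ W_{n+1}\subseteq W_n$ would by itself only control composition ``one level up'', but collapsing the entire chain by countable intersection is precisely what converts the approximate transitivity of a uniformity into the exact transitivity of an equivalence relation.
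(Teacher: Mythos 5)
Your proof is correct, but it achieves transitivity by a different mechanism than the paper. The paper chooses, for each $n$, a symmetric $\delta_n\in\mathcal U$ with $\delta_n^n\subseteq\varepsilon$ (the choices are independent, not nested), forms the single intersection $\delta_0:=\bigcap_{n}\delta_n\in\mathcal U$, and then takes $\delta:=\bigcup_{m}\delta_0^m$, the equivalence relation generated by $\delta_0$; the containment $\delta\subseteq\varepsilon$ holds because $\delta_0^m\subseteq\delta_m^m\subseteq\varepsilon$, and $\delta\in\mathcal U$ because $\delta\supseteq\delta_0$. This mirrors the group case (Lemma \ref{lem:na}), where one passes to the subgroup generated by $W$. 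You instead build a nested halving chain $W_{n+1}\circ W_{n+1}\subseteq W_n$ and observe that the intersection $E=\bigcap_n W_n$ is already transitive, so no closure step is needed; the price is that your entourages must be constructed recursively rather than chosen independently. Both proofs invoke the $P$-space hypothesis exactly once, for a single countable intersection. Your version yields an entourage that is itself the intersection (arguably the cleaner object), while the paper's version stays structurally parallel to the group-theoretic lemma that precedes it; either argument is complete and correct.
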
 
		\begin{proof}
			Let $\eps\in \mathcal U.$ We will  find an equivalence relation $\delta\in  \mathcal U$ such that $\delta \subseteq \eps.$ 	For every $n\in \N$ there exists a symmetric entourage $\delta_n\in \mathcal U$ such that $\delta_n^n\subseteq \eps.$  Since $(X,\mathcal U)$ is a uniform $P$-space, the equivalence relation $\delta=\bigcup_{m\in\N}(\cap_{n\in \N}\delta_n)^m\subseteq \eps$ is an element of $\mathcal U.$ 
			\end{proof}
		\begin{defi} (see \cite{AT,NT}, for example) Let $\tau$ be an infinite cardinal. 
	\ben \item	A topological group $G$ is called {\it $\tau$-bounded} if for every neighborhood $U$ of the identity,  there exists a set $F$ of cardinality not greater than $\tau$ such that 
		$FU=G.$
	\item A uniform space $(X,\mathcal U)$ is {\it $\tau$-narrow } if for every $\eps\in \mathcal U,$ there exists a set $\{x_\alpha: \alpha<\tau\} $ such that $X=\bigcup_{\alpha<\tau}\eps[x_\alpha].$\een
		\end{defi}
		\begin{lemma}\label{lem:bala}
	Let $\tau$ be an infinite cardinal.		Let $G$ be a topological group in which 
	the intersection of any family of cardinality at most $\tau$ of open sets is open. If $G$ is also $\tau$-bounded, then $G$ is balanced.
	\end{lemma}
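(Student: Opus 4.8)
The plan is to exhibit a local base at the identity $e$ consisting of \emph{normal} open subgroups. Once this is achieved, balancedness follows immediately: for a normal subgroup $N$ one has $x^{-1}y\in N\iff xy^{-1}\in N$ (using $N=N^{-1}$), so the corresponding left and right entourages $N_L$ and $N_R$ coincide; hence the families $\{N_L\}$ and $\{N_R\}$ are a common base, and $\mathcal L_G=\mathcal R_G$. To get started, observe that since $\tau$ is infinite, the hypothesis in particular guarantees that countable intersections of open sets are open, so $G$ is a $P$-group; by Lemma \ref{lem:na} it is therefore non-archimedean and admits a local base at $e$ consisting of open subgroups. It thus suffices to show that every such open subgroup contains an open \emph{normal} subgroup.

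The heart of the argument is the following. Fix an open subgroup $H$ of $G$. Applying $\tau$-boundedness to the neighborhood $H$, I obtain a set $F$ with $|F|\le\tau$ and $FH=G$; since the left cosets of $H$ are then covered by the family $\{fH: f\in F\}$, the index $[G:H]$ is at most $\tau$. Next I would pass to the normalizer $N_G(H)$ of $H$ in $G$: since $H\subseteq N_G(H)$, the index $[G:N_G(H)]$ is also at most $\tau$, and the distinct conjugates $g^{-1}Hg$ are indexed precisely by the cosets of $N_G(H)$, so there are at most $\tau$ of them. Hence the normal core
\[
N:=\bigcap_{g\in G} g^{-1}Hg
\]
is an intersection of at most $\tau$ distinct open subgroups. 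Invoking the hypothesis that an intersection of at most $\tau$ open sets is open, $N$ is open; being an intersection of all conjugates of $H$, it is a normal subgroup of $G$ contained in $H$.

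Combining the two steps, every member of the local base of open subgroups contains an open normal subgroup, so the open normal subgroups of $G$ form a local base at $e$; as noted above this yields $\mathcal L_G=\mathcal R_G$, i.e.\ $G$ is balanced. The main obstacle I anticipate is the bookkeeping in the key step: one must verify carefully that $\tau$-boundedness really does bound the index $[G:H]$ by $\tau$, and that the number of \emph{distinct} conjugates of $H$ — equivalently $[G:N_G(H)]$ — does not exceed $\tau$, so that the intersection defining the normal core legitimately falls under the $\tau$-intersection hypothesis. The remaining verifications (that $N$ is a subgroup, that it is normal, and that $N_L=N_R$) are routine.
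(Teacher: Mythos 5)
Your proof is correct and follows essentially the same route as the paper: take the normal core $N=\bigcap_{g\in G}g^{-1}Hg$ of an open subgroup $H$, observe that $\tau$-boundedness limits the number of distinct conjugates to at most $\tau$, invoke the intersection hypothesis to conclude $N$ is open, and finish with Lemma \ref{lem:na}. The only cosmetic difference is that you count the conjugates via the normalizer $N_G(H)$, whereas the paper notes directly that $xHx^{-1}=fHf^{-1}$ whenever $x\in fH$ with $f\in F$, so that $N=\bigcap_{x\in F}xHx^{-1}$; both rest on the same observation.
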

	\begin{proof}
	 Let $H$ be an open subgroup of $G.$ We will show that there exists a normal open subgroup $N$ of $G$ such that $N\subseteq H.$ 
	Let $N=\cap_{x\in G}xHx^{-1}.$
	Clearly, $N$ is a normal subgroup of $G$ and $N\leq H.$
	We show that $N$ is open.
	Since $G$ is $\tau$-bounded, there exists a subset $F\subseteq G$ with $|F|\leq \tau$ such that $FH=G.$
	It is easy to see that 
	$N=\cap_{x\in F}xHx^{-1}.$ Since
			$|F|\leq \tau,$ this intersection must be open and applying Lemma \ref{lem:na} completes the proof.
		\end{proof}
		\begin{corol}\cite[Lemma 4.4.1.b]{AT}
An
$\aleph_0$-bounded $P$-group is balanced.	\end{corol}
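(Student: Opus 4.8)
The plan is to recognize this corollary as simply the special case $\tau=\aleph_0$ of Lemma \ref{lem:bala}, so the entire task reduces to matching definitions. First I would unwind the definition of a $P$-group from Definition \ref{def:pgr}: being a $P$-space means exactly that the intersection of countably many open sets is open, and ``countably many'' is the same as ``a family of cardinality at most $\aleph_0$.'' Hence a $P$-group is precisely a topological group in which the intersection of any family of open sets of cardinality at most $\aleph_0$ is again open, which is verbatim the first hypothesis of Lemma \ref{lem:bala} with $\tau=\aleph_0$.

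Second, the hypothesis that $G$ is $\aleph_0$-bounded is literally the second hypothesis of Lemma \ref{lem:bala} (``$\tau$-bounded'') under the same choice $\tau=\aleph_0$. With both hypotheses verified, Lemma \ref{lem:bala} applies directly and yields that $G$ is balanced, completing the proof.

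I do not anticipate any genuine obstacle here, since the corollary is a direct specialization and requires no argument beyond the identification $\tau=\aleph_0$. If one wanted a self-contained derivation rather than a citation to Lemma \ref{lem:bala}, I would simply replay its proof with $\tau=\aleph_0$: by Lemma \ref{lem:na} the $P$-group $G$ is non-archimedean, so it suffices to show each open subgroup $H$ contains a normal open subgroup $N$; choosing a countable $F$ with $FH=G$ from $\aleph_0$-boundedness, one sets $N=\cap_{x\in F}xHx^{-1}$, which is a normal subgroup contained in $H$ and is open as a countable intersection of open sets in the $P$-group $G$. Either route gives the result with no new work.
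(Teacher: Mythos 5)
Your proof is correct and is exactly the paper's intended argument: the corollary is stated immediately after Lemma \ref{lem:bala} precisely as its specialization to $\tau=\aleph_0$, with the $P$-group property supplying the countable-intersection hypothesis and $\aleph_0$-boundedness supplying the other. Your optional self-contained replay (via Lemma \ref{lem:na} and $N=\cap_{x\in F}xHx^{-1}$) also faithfully mirrors the proof of that lemma, so there is nothing to add.
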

\begin{lemma}\label{lem:punif}
Let $(X,\mathcal U)$ be a uniform $P$-space. A function $f:(X,\mathcal U)\to \R$ is uniformly continuous if and only if there exists $\eps\in \mathcal U$ such that $(x,y)\in \eps \Rightarrow f(x)=f(y).$
\end{lemma}
\begin{proof}
	The ``if" part is trivial for every uniform space (even if it is not a uniform $P$-space). We prove the ``only if" part. Since $f:(X,\mathcal U)\to \R$ is uniformly continuous, for every $n\in \N$ there exists $\eps_n\in \mathcal U$ such that $(x,y)\in \eps \Rightarrow |f(x)-f(y)|<\frac{1}{n}.$ Let $\eps=\cap_{n\in \N}\eps_n.$  Since $(X,\mathcal U)$ is a uniform $P$-space, we have $\eps\in \mathcal U.$ Now,  $(x,y)\in \eps$  implies that $f(x)=f(y).$ 
	\end{proof}
\begin{corol}\label{cor:lco}
Let $G$ be a $P$-group with a symmetric subset $B$. 
A function $f:B\to \R$ is $\mathcal L_G^B$-($\mathcal R_G^B$-)uniformly continuous if and only if there exists an open subgroup $H$ of $G$
such that $f$ is constant on $B\cap xH$ 
(resp., $B\cap Hx$) for every $x\in B$.
\end{corol}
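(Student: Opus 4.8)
The plan is to reduce the statement to Lemma \ref{lem:punif} by first checking that the trace uniformity $\mathcal L_G^B$ turns $B$ into a uniform $P$-space, and then to translate the resulting entourage condition into a condition about cosets using the non-archimedean structure supplied by Lemma \ref{lem:na}. I will treat the left case; the right case is entirely symmetric (replacing $xH$ by $Hx$).

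First I would observe that the left uniformity $\mathcal L_G$ itself is a uniform $P$-space structure. Indeed, for neighborhoods $U_n$ of the identity we have $\bigcap_{n\in\N}(U_n)_L=\left(\bigcap_{n\in\N}U_n\right)_L$, and since $G$ is a $P$-group the set $\bigcap_{n\in\N}U_n$ is again a neighborhood of the identity, so the left-hand side belongs to $\mathcal L_G$. Passing to the trace, any countable family $\eps_n=\delta_n\cap(B\times B)$ in $\mathcal L_G^B$ satisfies $\bigcap_n\eps_n=\left(\bigcap_n\delta_n\right)\cap(B\times B)$, with $\bigcap_n\delta_n\in\mathcal L_G$ by the previous remark; hence $(B,\mathcal L_G^B)$ is a uniform $P$-space and Lemma \ref{lem:punif} applies to it.

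Next, Lemma \ref{lem:punif} says that $f$ is $\mathcal L_G^B$-uniformly continuous precisely when there is $\eps\in\mathcal L_G^B$ with $(x,y)\in\eps\Rightarrow f(x)=f(y)$. Writing $\eps=\delta\cap(B\times B)$ with $\delta\in\mathcal L_G$, I would use that $\delta$ contains some basic entourage $U_L$, and then invoke Lemma \ref{lem:na} to find an open subgroup $H$ with $H\subseteq U$, so that $H_L\cap(B\times B)\subseteq\eps$. Conversely, for any open subgroup $H$ the set $H_L\cap(B\times B)$ is a legitimate element of $\mathcal L_G^B$. This shows that the existence of an entourage as in Lemma \ref{lem:punif} is equivalent to the existence of an open subgroup $H$ such that $(x,y)\in H_L\cap(B\times B)$ implies $f(x)=f(y)$.

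Finally I would convert the last condition into the coset formulation. Since $(x,y)\in H_L$ means $y\in xH$, the implication ``$f(x)=f(y)$ for all $x,y\in B$ with $y\in xH$'' says exactly that $f$ is constant on each set $B\cap xH$: if $y,z\in B\cap xH$, then $y^{-1}z=(x^{-1}y)^{-1}(x^{-1}z)\in H$ because $H$ is a subgroup, whence $f(y)=f(z)$; and the reverse reading is immediate since $x\in B\cap xH$. I do not expect any serious obstacle here---the only point requiring a little care is the verification that $\mathcal L_G^B$ is a uniform $P$-space, which is precisely what licenses the use of Lemma \ref{lem:punif} on the possibly non-subgroup set $B$.
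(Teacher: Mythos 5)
Your proof is correct and follows essentially the same route as the paper's: the paper likewise observes that $(B,\mathcal L_G^B)$ is a uniform $P$-space, applies Lemma \ref{lem:punif}, and then translates the entourage condition into the coset condition via the definition of the left uniformity (with the open subgroup supplied by Lemma \ref{lem:na}). Your write-up simply makes explicit the details that the paper's two-line proof leaves to the reader.
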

\begin{proof}
Clearly, $(B,\mathcal L_G^B)$ ($(B,\mathcal R_G^B)$) is a uniform $P$-space. Now use Lemma \ref{lem:punif} and the definition of the left  (resp., right)  uniformity to conclude the proof.
\end{proof}
\begin{defi}\label{def:bal}
	We say that a symmetric subset $B$  of a topological group $G$ is:\ben \item  {\it  balanced} if the left and right uniformities of $G$ coincide on $B.$
	\item
	{\it functionally balanced} if every bounded $\mathcal L_G^B$-uniformly continuous function $f:B\to \R$  is $\mathcal R_G^B$-uniformly continuous.
	\item  {\it strongly functionally balanced}  if every  $\mathcal L_G^B$-uniformly continuous function $f:B\to \R$  is $\mathcal R_G^B$-uniformly continuous.	\een\end{defi}
\begin{thm}\label{thm:equiv}
 Let $G$ be a  $P$-group with a symmetric subset $B$.  Then  the following assertions are equivalent:
 \ben \item $B$ is strongly functionally balanced. \item $B$ is functionally balanced.
 \item If 
 $\eps\in  \mathcal L_G^B$ is an equivalence relation with at most
 $\mathfrak c$ equivalence classes, then $\eps\in \mathcal R_G^B.$  
  \een
\end{thm}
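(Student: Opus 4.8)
The plan is to establish the cycle $(1)\Rightarrow(2)\Rightarrow(3)\Rightarrow(1)$, using Lemma \ref{lem:punif} as the main engine together with the fact, noted in the proof of Corollary \ref{cor:lco}, that both $(B,\mathcal L_G^B)$ and $(B,\mathcal R_G^B)$ are uniform $P$-spaces. The implication $(1)\Rightarrow(2)$ is immediate, since every bounded function is in particular a function, so strong functional balance formally contains functional balance.

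For $(2)\Rightarrow(3)$, I would start with an equivalence relation $\eps\in\mathcal L_G^B$ having at most $\mathfrak c$ classes and manufacture a bounded left-uniformly continuous function that ``sees'' exactly $\eps$. Since $|[0,1]|=\mathfrak c$, I can fix an injection from the set of $\eps$-classes into $[0,1]$ and let $f:B\to[0,1]$ send each point to the image of its class. By construction $f$ is constant on $\eps$-classes, so $(x,y)\in\eps\Rightarrow f(x)=f(y)$, and Lemma \ref{lem:punif} makes $f$ an $\mathcal L_G^B$-uniformly continuous function; moreover it is bounded. Functional balance then gives that $f$ is $\mathcal R_G^B$-uniformly continuous, so by Lemma \ref{lem:punif} there is $\delta\in\mathcal R_G^B$ with $(x,y)\in\delta\Rightarrow f(x)=f(y)$. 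Because $f$ is injective on classes, $f(x)=f(y)$ forces $(x,y)\in\eps$; hence $\delta\subseteq\eps$, and since a uniformity is a filter we conclude $\eps\in\mathcal R_G^B$.

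For $(3)\Rightarrow(1)$, I would take an arbitrary $\mathcal L_G^B$-uniformly continuous $f:B\to\R$ (with no boundedness assumed) and produce an equivalence relation to feed into (3). Lemma \ref{lem:punif} yields $\eps\in\mathcal L_G^B$ with $(x,y)\in\eps\Rightarrow f(x)=f(y)$, and I would enlarge $\eps$ to the level-set relation $E_f=\{(x,y)\in B\times B: f(x)=f(y)\}$. Then $\eps\subseteq E_f$, so $E_f\in\mathcal L_G^B$ by the filter property, $E_f$ is an equivalence relation, and its classes are in bijection with the value set $f(B)\subseteq\R$, so there are at most $|\R|=\mathfrak c$ of them. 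Applying (3) gives $E_f\in\mathcal R_G^B$, and since $(x,y)\in E_f\Rightarrow f(x)=f(y)$, the ``if'' part of Lemma \ref{lem:punif} shows that $f$ is $\mathcal R_G^B$-uniformly continuous, establishing strong functional balance.

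The conceptual crux, and the reason the threshold is exactly $\mathfrak c$ rather than any smaller cardinal, is the double role of $|\R|=|[0,1]|=\mathfrak c$: in $(2)\Rightarrow(3)$ it bounds how many classes can be separated by a single bounded real-valued function, while in $(3)\Rightarrow(1)$ it bounds how many classes the level sets of an arbitrary (possibly unbounded) real-valued function can produce. Matching these two counts is what lets a purely functional hypothesis about bounded maps upgrade all the way to the strong, unbounded version; the only point requiring care is to check that enlarging an entourage to a coarser equivalence relation keeps it inside the relevant trace uniformity, which is just the filter axiom.
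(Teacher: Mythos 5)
Your proposal is correct and follows essentially the same route as the paper: the same cycle $(1)\Rightarrow(2)\Rightarrow(3)\Rightarrow(1)$, the same injective-on-classes bounded function into $[0,1]$ for $(2)\Rightarrow(3)$, and the same level-set equivalence relation (with at most $\mathfrak c$ classes) for $(3)\Rightarrow(1)$. The only cosmetic difference is that you invoke Lemma \ref{lem:punif} directly on the trace uniformities $(B,\mathcal L_G^B)$ and $(B,\mathcal R_G^B)$, whereas the paper routes the same reasoning through Corollary \ref{cor:lco}, which restates that lemma in terms of open subgroups.
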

\begin{proof}
$(1) \Rightarrow (2):$ Trivial.\\
$(2) \Rightarrow (3):$  Let 
$\eps\in \mathcal L_G^B$ be an equivalence relation with at most
$\mathfrak c$ equivalence classes. It follows that there exists a function $f:B\to [0,1]\subseteq \R$ such that 
$f(x)=f(y) $ if and only if $(x,y)\in\eps.$ Clearly, $f:B\to \R$ is a bounded $\mathcal L_G^B$-uniformly continuous function. Since   $B$ is   functionally balanced, then $f:B\to \R$  is also $\mathcal R_G^B$-continuous. By Corollary \ref{cor:lco}, there exists an open subgroup $H$ of $G$
such that $f$ is constant on $B\cap Hx$ for every $x\in B.$ 
 By the definition of the right uniformity,  $\{(t,s)\in B^2| \ ts^{-1}\in H\}\in \mathcal R_G^B.$ The definition of $f$ implies that $\{(t,s)\in B^2| \ ts^{-1}\in H\}\subseteq \eps,$ and thus $\eps\in \mathcal R_G^B.$\\$(3) \Rightarrow (1):$ Let  $f:B\to \R$  be a $\mathcal L_G^B$-uniformly continuous function. By Corollary \ref{cor:lco}, there exists an open subgroup $H$
such that $f$ is constant on $B\cap xH$ 
for every $x\in B$.
We have $\{(t,s)\in B^2| \ t^{-1}s\in H\}\subseteq \eps:= \{(t,s)| \ f(t)=f(s)\}.$ Hence, $\eps\in\mathcal L_G^B$  and clearly $\eps$ has at most  $\mathfrak c$ equivalence classes. By $(3), \ \eps= \{(t,s)| \ f(t)=f(s)\}\in \mathcal R_G^B.$ Therefore, $f$ is $\mathcal R_G^B$-uniformly continuous and we conclude that $B$ is strongly functionally balanced.
\end{proof}

Letting $B=G$ in Theorem \ref{thm:equiv} we obtain the following:
\begin{corol}\label{cor:funbal}
Let $G$ be a  $P$-group.  Then  $G$ is  functionally balanced if and only if it is strongly functionally balanced.
\end{corol}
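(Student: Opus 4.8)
The plan is to deduce this corollary as the special case $B=G$ of Theorem \ref{thm:equiv}. First I would observe that $G$ is itself a symmetric subset of $G$, since $G^{-1}=G$, so the hypotheses of Theorem \ref{thm:equiv} apply verbatim with $B=G$. The key reduction is then to identify the trace uniformities with the full ones: because $B\times B=G\times G$ contains every entourage, the defining condition $\delta\cap(B\times B)=\eps$ forces $\eps=\delta$, whence $\mathcal L_G^G=\mathcal L_G$ and $\mathcal R_G^G=\mathcal R_G$.

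With this identification in hand, Definition \ref{def:bal}(2) for $B=G$ reads exactly that every bounded $\mathcal L_G$-uniformly continuous function $f:G\to\R$ is $\mathcal R_G$-uniformly continuous, which is the definition of $G$ being functionally balanced recalled in the introduction. Likewise Definition \ref{def:bal}(3) for $B=G$ is the statement that $G$ is strongly functionally balanced. Thus conditions $(1)$ and $(2)$ of Theorem \ref{thm:equiv} become, respectively, the assertions that $G$ is strongly functionally balanced and that $G$ is functionally balanced.

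Finally I would invoke the equivalence $(1)\Leftrightarrow(2)$ established in Theorem \ref{thm:equiv}, which immediately yields the desired biconditional. There is no genuine obstacle here beyond the bookkeeping check that the $B=G$ instances of the two notions in Definition \ref{def:bal} coincide with the classical definitions given in the introduction; all the substantive work---in particular the passage through condition $(3)$ and the use of Corollary \ref{cor:lco}---has already been carried out in the proof of Theorem \ref{thm:equiv}.
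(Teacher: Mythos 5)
Your proposal is correct and is exactly the paper's argument: the paper proves this corollary simply by ``letting $B=G$ in Theorem \ref{thm:equiv}'' and using the equivalence $(1)\Leftrightarrow(2)$. Your additional bookkeeping (that $\mathcal L_G^G=\mathcal L_G$, $\mathcal R_G^G=\mathcal R_G$, and that Definition \ref{def:bal} with $B=G$ recovers the classical notions from the introduction) just makes explicit what the paper leaves tacit.
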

Recall the following result of Hern\'{a}ndez:
\begin{thm}\cite[Theorem 2]{Her2000}
Let $G$ be a non-archimedean $\aleph_0$-bounded topological group. Then $G$ is balanced if and only if it is strongly functionally balanced.
\end{thm}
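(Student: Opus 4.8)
The plan is to prove the two implications separately, noting that the forward direction is purely formal while the converse is where the hypotheses do the work. For the easy direction, if $G$ is balanced then $\mathcal L_G=\mathcal R_G$, so a function $f\colon G\to\R$ is $\mathcal L_G$-uniformly continuous exactly when it is $\mathcal R_G$-uniformly continuous; in particular every $\mathcal L_G$-uniformly continuous function is $\mathcal R_G$-uniformly continuous, and $G$ is strongly functionally balanced. This uses neither non-archimedeanness nor $\aleph_0$-boundedness.

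For the converse, suppose $G$ is strongly functionally balanced. Since $G$ is non-archimedean, $\mathcal L_G$ has a base consisting of the equivalence relations $\eps_H:=\{(x,y)\in G^2 : x^{-1}y\in H\}$, where $H$ ranges over the open subgroups of $G$. Because the inversion map $x\mapsto x^{-1}$ is a uniform isomorphism carrying $\mathcal L_G$ onto $\mathcal R_G$, the inclusion $\mathcal L_G\subseteq\mathcal R_G$ is equivalent to equality; hence it suffices to show that each $\eps_H$ lies in $\mathcal R_G$. Fix an open subgroup $H$. Here $\aleph_0$-boundedness enters: there is a countable set $F$ with $FH=G$, so $H$ has at most countably many left cosets. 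The key step is then to produce an $\mathcal L_G$-uniformly continuous function separating these cosets by a uniform gap. Enumerate the cosets as $\{C_n\}$ and define $f\colon G\to\R$ by setting $f\equiv n$ on $C_n$. Then $f$ is constant on left cosets of the open subgroup $H$, so it is $\mathcal L_G$-uniformly continuous (the entourage $\eps_H$ witnesses this), while distinct cosets receive $f$-values differing by at least $1$.

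Now strong functional balance applies to the (generally unbounded) function $f$, so $f$ is $\mathcal R_G$-uniformly continuous: there is an open subgroup $K$ with $xy^{-1}\in K\Rightarrow |f(x)-f(y)|<1$. Since $f$ separates distinct cosets by at least $1$, this forces $f(x)=f(y)$, i.e.\ $x$ and $y$ lie in a common left coset of $H$, whence $x^{-1}y\in H$. Thus $\{(x,y)\in G^2 : xy^{-1}\in K\}\subseteq\eps_H$, and as the left-hand set belongs to $\mathcal R_G$ we conclude $\eps_H\in\mathcal R_G$. Therefore $\mathcal L_G\subseteq\mathcal R_G$, and by the inversion symmetry above $G$ is balanced.

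I expect the main subtlety to be the passage from $\mathcal R_G$-uniform continuity of $f$ — which a priori furnishes a different open subgroup $K_\eps$ for each tolerance $\eps>0$, and would otherwise demand an uncountable intersection to control all cosets simultaneously — to a \emph{single} open subgroup $K$. Assigning $f$ the integer values (a uniform gap of $1$) is precisely the device that makes the single choice $\eps=1$ enough; it is also why strong, rather than merely bounded, functional balance is the natural hypothesis in the $\aleph_0$-bounded setting, since an unbounded function is needed to keep countably many cosets uniformly separated. This is the exact role played, in the $P$-group arguments of Theorem \ref{thm:equiv} and Corollary \ref{cor:lco}, by the $P$-property (via Lemma \ref{lem:punif}), which there collapses a countable family of entourages into one; in the present non-archimedean $\aleph_0$-bounded situation the uniform integer gap substitutes for that collapse.
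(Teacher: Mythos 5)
Your proposal is correct, but there is no internal proof to compare it against: the paper states this theorem purely as a citation of Hern\'{a}ndez \cite{Her2000}, without reproducing an argument. Judged on its own merits, your proof is sound. The reduction is valid: since $G$ is non-archimedean, the relations $\varepsilon_H=\{(x,y): x^{-1}y\in H\}$ ($H$ an open subgroup) form a base of $\mathcal L_G$, and the inversion symmetry correctly upgrades $\mathcal L_G\subseteq\mathcal R_G$ to equality. The $\aleph_0$-boundedness does give countably many left cosets of $H$, the integer-valued coset function is $\mathcal L_G$-uniformly continuous (witnessed by $\varepsilon_H$ at every tolerance), and the uniform gap of $1$ between distinct cosets makes a single application of $\mathcal R_G$-uniform continuity at tolerance $1$ yield $\{(x,y): xy^{-1}\in K\}\subseteq\varepsilon_H$, hence $\varepsilon_H\in\mathcal R_G$ since uniformities are upward closed. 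It is worth noting how your device relates to the machinery the paper does develop for $P$-groups: in Theorem \ref{thm:equiv} and in the referee's short proof of Theorem \ref{thm:pbal}, the separating function maps into $[0,1]$ (injective on cosets, so up to $\mathfrak c$ of them are allowed), and it is the $P$-property, via Corollary \ref{cor:lco}, that converts right uniform continuity into exact constancy on right cosets of a single open subgroup. Your unbounded integer-valued function achieves that same collapse with no $P$-assumption, at the price of needing only countably many cosets and genuinely unbounded test functions --- which is exactly why the hypothesis in Hern\'{a}ndez's theorem is \emph{strong} functional balance and $\aleph_0$-boundedness, whereas the paper's $P$-group results can work with bounded functions and $\mathfrak c$-boundedness.
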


In case  the non-archimedean group is  a $P$-group it suffices to require  $\mathfrak c$-boundedness, as it follows from the following theorem:
\begin{thm}\label{thm:pbal}
Let $G$ be a $\mathfrak c$-bounded $P$-group. Then $G$ is balanced if and only if it is   functionally balanced. 
\end{thm}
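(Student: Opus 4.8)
The plan is to establish only the nontrivial implication, namely that functional balancedness implies balancedness; the converse is immediate, since if $\mathcal L_G=\mathcal R_G$ then every $\mathcal L_G$-uniformly continuous function is trivially $\mathcal R_G$-uniformly continuous, so $G$ is (even strongly) functionally balanced.

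For the forward direction, I would first invoke Lemma \ref{lem:na} to record that $G$ is non-archimedean. Consequently the sets $H_L:=\{(x,y)\in G^2:\ x^{-1}y\in H\}$, where $H$ ranges over the open subgroups of $G$, form a base of the left uniformity $\mathcal L_G$. Each $H_L$ is an equivalence relation whose classes are precisely the left cosets of $H$, so it has exactly $[G:H]$ classes. It therefore suffices to prove that every such $H_L$ belongs to $\mathcal R_G$, for then $\mathcal L_G\subseteq\mathcal R_G$.

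The crucial point is that $\mathfrak c$-boundedness controls the index of open subgroups. Fixing an open subgroup $H$, which is in particular a neighborhood of $e$, $\mathfrak c$-boundedness provides a set $F$ with $|F|\le\mathfrak c$ and $FH=G$. Since the left cosets partition $G$, every coset is of the form $fH$ for some $f\in F$, whence $[G:H]\le|F|\le\mathfrak c$. Thus $H_L$ is an equivalence relation lying in $\mathcal L_G$ with at most $\mathfrak c$ equivalence classes. As $G$ is functionally balanced, the implication $(2)\Rightarrow(3)$ of Theorem \ref{thm:equiv}, applied with $B=G$, gives $H_L\in\mathcal R_G$. Letting $H$ vary over a base of open subgroups then yields $\mathcal L_G\subseteq\mathcal R_G$.

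Finally I would promote this single inclusion to equality by a formal symmetry argument. The inverse map $\iota\colon G\to G$, $\iota(x)=x^{-1}$, satisfies $(\iota\times\iota)(H_L)=H_R$, where $H_R:=\{(x,y)\in G^2:\ xy^{-1}\in H\}$; hence the involution $\iota\times\iota$ carries a base of $\mathcal L_G$ onto a base of $\mathcal R_G$ and interchanges the two uniformities. Applying it to the inclusion $\mathcal L_G\subseteq\mathcal R_G$ produces $\mathcal R_G\subseteq\mathcal L_G$, so $\mathcal L_G=\mathcal R_G$ and $G$ is balanced. The only real content lies in the index estimate of the previous paragraph: that is exactly where $\mathfrak c$-boundedness enters, and it is precisely what makes the cardinality restriction in Theorem \ref{thm:equiv}(3) applicable, the rest being the non-archimedean reduction and the symmetry of the inverse map.
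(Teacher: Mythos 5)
Your proof is correct and follows essentially the same route as the paper: both reduce the statement to condition (3) of Theorem \ref{thm:equiv} (with $B=G$), using Lemma \ref{lem:na} to get a base of open subgroups and $\mathfrak c$-boundedness to bound their index by $\mathfrak c$. The only difference is that you spell out explicitly (via the inversion map) the standard step that $\mathcal L_G\subseteq\mathcal R_G$ forces equality, which the paper leaves implicit.
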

\begin{proof}
If $H$ is a subgroup of index at most $\mathfrak c,$ then $\eps:=\{(t,s)| \ t^{-1}s\in H\}$ has at most $\mathfrak c$ equivalence classes.  So, in case $G$ is a $\mathfrak c$-bounded $P$-group, condition $(3)$ of Theorem \ref{thm:equiv}  is equivalent to the coincidence of the left and right uniformities. This completes the proof.
\end{proof}
\begin{remark}\
	\ben [(a)] \item
Theorem \ref{thm:pbal} means that a negative solution to Itzkowitz's problem cannot be found in the class of $\mathfrak c$-bounded $P$-groups. 
\item  Theorem \ref{thm:pbal} can  be viewed also as a corollary of Theorem \ref{thm:equiv}. The latter plays an important role in proving Theorem \ref{thm:last}. As pointed out by the referee, Theorem \ref{thm:pbal} admits a much shorter proof. Indeed, one can take an open subgroup $U$ of $G$ and a map $f:G \to [0,1],$ so that $f$ is constant on each $xU$ and injective on distinct cosets. Since $f$ is left uniformly continuous and $G$ is a functionally balanced $P$-group, there is an open subgroup $V$ such that $f$ is constant on each $Vx.$ Then $Vx$ is contained in $xU$ and $G$ is balanced.\een
\end{remark}
\section{Coincidence of free objects}\label{sec:coi} \begin{defi}\cite[Definition 3.1]{MS13} \label{d:FreeGr} Let $\Omega$ be a subclass of $\mathbf{TGr}$ 
	and 
	$(X,{\mathcal U})$ be a uniform space. By an
	{\it $\Omega$-free topological group} of $(X,{\mathcal U})$ we mean
	a pair $(F_{\Omega}(X,{\mathcal U}),i)$, where
	$F_{\Omega}(X,{\mathcal U})\in \Omega$ and $i: X \to
	F_{\Omega}(X,{\mathcal U})$ is a uniform map satisfying the
	following universal property.   For every uniformly
	continuous map $\varphi: (X,{\mathcal U}) \to G,$ where $G \in \Omega,$ there exists a unique 
	continuous homomorphism $\Phi: F_{\Omega}(X,{\mathcal U})
	\to G $ for which the following diagram commutes:
	\begin{equation*} \label{equ:ufn}
	\xymatrix { (X,{\mathcal U}) \ar[dr]_{\varphi} \ar[r]^{i} &
		F_{\Omega}(X,{\mathcal U})
		\ar[d]^{\Phi} \\
		& G }
	\end{equation*}
\end{defi}
For $\Omega=\mathbf{TGr}$
the universal object
$F_{\Omega}(X,{\mathcal U})$  is the {\it uniform free topological
	group} of $(X,{\mathcal U}).$  This group was invented by Nakayama \cite{N43}
and studied, among others, by Numella \cite{NU82} and Pestov \cite{pes85,pes93}. In particular, Pestov described
its topology.\\
Let $(X,\mathcal U)$ be a non-archimedean uniform space.   \ben \item For $\Omega =\mathbf{NA}$ we obtain the {\it free non-archimedean group}
$F_{\sna}$. \item In case $\Omega =\mathbf{NA_b}$, the universal object is the {\it free non-archimedean balanced group}
$F^b_{\sna}$.\een  These groups were defined and studied by Megrelishvili and the author in \cite{MS13}. \vskip 0.3cm We collect some known results from \cite{MS13, pes85}. Denote by $j_2$ the mapping $(x,y)\mapsto x^{-1}y$ from $X^{2}$
to  $F(X)$ and by $j_{2}^{\ast}$ the mapping
$(x,y)\mapsto xy^{-1}.$ 

\begin{defi}  \cite[Chapter 4]{DR81}	If $P$ is a group and $(V_n)_{n\in \N}$ a sequence of subsets of
	$P,$ define
	$$[(V_n)]:=\bigcup_{n\in \N}
	\bigcup_{\pi\in S_n} V_{\pi(1)} V_{\pi(2)}\cdots  V_{\pi(n)}.$$%
\end{defi}
\begin{remark} \cite[Remark 4.3]{MS13}\label{rem:const}
	Note that if $(V_n)_{n\in \N}$ is a constant sequence such that
	$$V_1=V_2=\cdots=V_n=\cdots =V,$$   then $[(V_n)]=\bigcup_{n\in
		\N}V^{n}.$ In this case we write $[V]$ instead of $[(V_n)].$
	It is easy to see that if $V=V^{-1},$ then $[V]$ is simply the subgroup generated by $V.$
\end{remark} \vskip 0.5 cm
\begin{defi} \
\ben \item \cite{pes85} For  every $\psi\in {\mathcal U}^{F(X)}$
let $$V_{\psi}:=\bigcup_{w\in F(X)}w(j_{2}(\psi(w))\cup
j_{2}^{\ast}(\psi(w)))w^{-1}.$$
\item  \cite[Definition 4.9.2]{MS13} As a particular case in which every $\psi$ is a constant
function we obtain the  set
$$\tilde{\eps}:=\bigcup_{w\in F(X)}w(j_{2}(\eps)\cup
j_{2}^{\ast}(\eps))w^{-1}.$$ \een
\end{defi}

\begin{thm}\label{thm:desc} \
\ben \item \textnormal{(Pestov \cite[Theorem 2]{pes85})} Let $(X,U)$ be a uniform space. The set
$\{[(V_{\psi_{n}})]\},$ where $\{\psi_{n}\}$ extends over the family of
all possible sequences of elements from ${\mathcal U}^{F(X)},$ is a
local base at the identity element of the uniform free topological group $F(X,\mathcal U).$
\item  \cite[Theorem 4.13]{MS13} Assume that $(X,U)$ is a non-archimedean uniform space. Then,
\ben[(a)] \item  The set $\{[\mathcal{V}_{\psi}]:\ \psi\in
\mathcal{U}^{F(X)}\}$  is a local base at the identity element of $F {\sna}(X,\mathcal U),$ the uniform free non-archimedean  group.
\item The family (of normal subgroups)
$\{[\tilde{\eps}]:\ \eps\in  \mathcal{B}\}$ is a local base at the identity element of   $F^b_ {\sna}(X,\mathcal U),$ the uniform free  non-archimedean balanced group.
\een \een
\end{thm}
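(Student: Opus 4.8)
The underlying group in all three cases is the abstract free group $F(X)$, and by the universal property of Definition \ref{d:FreeGr} the topology carried by the respective free object is the \emph{finest} group topology, within the prescribed class (all topological groups; non-archimedean groups; non-archimedean balanced groups), for which the canonical map $i\colon(X,\mathcal{U})\to F(X)$ is uniformly continuous. Equivalently, this topology is the supremum of all group topologies of the given type that make $i$ uniform. The plan is therefore, in each part, to show that the proposed family $\mathcal{N}$ is a local base at the identity for precisely that supremum, by a two-sided comparison.

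First I would verify that $\mathcal{N}$ is a neighborhood base at $e$ for some group topology $\mathcal{T}_0$ of the required type and that $i$ is uniformly continuous for $\mathcal{T}_0$; this yields $\mathcal{T}_0\le\mathcal{T}_{\mathrm{free}}$. The ingredients are that each $V_\psi$ is symmetric (since $j_{2}(\eps)^{-1}=j_{2}(\eps)$ and $j_{2}^{\ast}(\eps)^{-1}=j_{2}^{\ast}(\eps)$ for symmetric $\eps$) and that conjugation merely permutes the family, as $w_0V_\psi w_0^{-1}=V_{\psi'}$ with $\psi'(w)=\psi(w_0^{-1}w)$; the permuted-product construction $[(V_{\psi_n})]$ of Dierolf--Roelcke \cite{DR81} is exactly what supplies, for a given member, a smaller member whose square and conjugates lie inside it, so the standard neighborhood criterion for a group topology is met. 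Uniform continuity of $i$ is immediate from $i(x)^{-1}i(y)\in j_{2}(\eps)$ and $i(x)i(y)^{-1}\in j_{2}^{\ast}(\eps)$ whenever $(x,y)\in\eps$.

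The reverse inequality $\mathcal{T}_{\mathrm{free}}\le\mathcal{T}_0$ is the main obstacle: one must show that every identity neighborhood $O$ in an arbitrary group topology $\mathcal{T}$ making $i$ uniform already contains a member of $\mathcal{N}$. Starting from $O$, continuity of multiplication and inversion produces a decreasing sequence of symmetric neighborhoods with controlled products and conjugates; uniform continuity of $i$ then assigns, for each index $n$ and each word $w$, an entourage to serve as $\psi_n(w)$, and the nesting is arranged so that the full permuted product $[(V_{\psi_n})]$ lands inside $O$. The delicate point is the coherent choice of the functions $\psi_n$ simultaneously over all $w\in F(X)$, which is the technical core of Pestov's construction.

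In the non-archimedean cases the argument streamlines. As $(X,\mathcal{U})$ is non-archimedean we may take every $\eps$ to be an equivalence relation, so each $V_\psi$ is a symmetric set whose generated subgroup $[V_\psi]$ (Remark \ref{rem:const}) is open; since open subgroups form a base in any non-archimedean group topology, a single function $\psi$ replaces the sequences, and in the comparison one only needs $V_\psi\subseteq O$ rather than product control, because the target $O$ is itself a subgroup. For part (2)(b) one restricts to balanced topologies, equivalently non-archimedean topologies with a base of \emph{normal} open subgroups; here the constant-function set $\tilde{\eps}$ is invariant under conjugation, so $[\tilde{\eps}]$ is a normal open subgroup, and normality lets the given $O$ absorb all conjugates, so that $\tilde{\eps}\subseteq O$ already forces $[\tilde{\eps}]\subseteq O$. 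Verifying that these normal subgroups realize the finest balanced topology making $i$ uniform finishes part (2)(b).
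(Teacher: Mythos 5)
First, a point of reference: the paper offers \emph{no} proof of this theorem. It is explicitly collected as a known result, with part (1) credited to Pestov \cite{pes85} and part (2) to \cite{MS13}, so your attempt can only be compared with the original arguments in those sources. Your overall strategy is exactly theirs (and is, in essence, the content of Remark \ref{rem:nar}): the topology of each free object is the finest group topology, within the relevant class, making $i$ uniformly continuous, and the description of the local base is proved by a two-sided comparison. Your treatment of the non-archimedean parts (2)(a) and (2)(b) is correct and essentially complete: reduce to the case where the given neighborhood $O$ is an open (resp.\ open normal) subgroup, choose for each $w$ an entourage $\eps$ with $j_2(\eps)\cup j_2^{\ast}(\eps)\subseteq w^{-1}Ow$ (in the balanced case normality lets a single $\eps$ work for all $w$ at once), and then the fact that $O$ is a subgroup upgrades $V_\psi\subseteq O$ (resp.\ $\tilde\eps\subseteq O$) to $[V_\psi]\subseteq O$ (resp.\ $[\tilde\eps]\subseteq O$), using Remark \ref{rem:const}.

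In part (1), however, the containment $[(V_{\psi_n})]\subseteq O$ is precisely what you leave unproved (``the technical core of Pestov's construction''), so as written there is a gap --- though a fillable one, and you slightly misplace the difficulty. The ``coherent choice of the $\psi_n$ simultaneously over all $w$'' requires no coherence at all: for each $n$ and each $w$ separately, $w^{-1}U_nw$ is an identity neighborhood in the given topology, so uniform continuity of $i$ yields $\psi_n(w)\in\mathcal U$ with $j_2(\psi_n(w))\cup j_2^{\ast}(\psi_n(w))\subseteq w^{-1}U_nw$, i.e.\ $V_{\psi_n}\subseteq U_n$; these choices are independent. The real content is product control: setting $U_0:=O$ and choosing symmetric neighborhoods with $U_{n+1}^{3}\subseteq U_n$ for all $n\geq 0$, an induction on the least index occurring shows that every product of finitely many $U_i$ with pairwise distinct indices, taken in any order, lies in $O$; hence $V_{\psi_{\pi(1)}}\cdots V_{\psi_{\pi(n)}}\subseteq U_{\pi(1)}\cdots U_{\pi(n)}\subseteq O$ for all $n$ and $\pi$, which is what is needed. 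Symmetrically, your appeal to \cite{DR81} in the ``forward'' direction conceals the other nontrivial check, namely that for every member $[(V_{\psi_n})]$ there is a member whose square lies inside it: take $\phi_n:=\psi_{2n-1}\cap\psi_{2n}$ pointwise; then a product of two permuted products of the $V_{\phi_k}$ is contained in a product of \emph{distinct} $V_{\psi_k}$ (odd indices coming from the first factor, even from the second), hence in $[(V_{\psi_n})]$, since each $V_{\psi_k}$ contains the identity and missing factors can be inserted. With these two verifications made explicit, your outline becomes a complete proof along the same lines as the cited ones.
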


\begin{remark} \label{rem:nar} Let $(X,\mathcal U)$ be a non-archimedean uniform space. By the universal properties of the universal objects it is clear that: \ben \item the topology of $F {\sna}(X,\mathcal U)$ is the maximal non-archimedean group topology on $F(X)$ that is  coarser than the topology of $F(X,\mathcal U).$ \item the topology of  $F^b_ {\sna}(X,\mathcal U)$  is the maximal non-archimedean balanced group topology on $F(X)$ that is coarser than the topology of $F(X,\mathcal U).$\een In particular,  if $F(X,\mathcal U)$ is non-archimedean, then $F(X,\mathcal U)$ coincides with $F {\sna}(X,\mathcal U).$ If $F(X,\mathcal U)$ is also balanced, then these groups coincide also with  $F^b_ {\sna}(X,\mathcal U).$
		\end{remark}
		\begin{thm}\label{thm:coin}
Let $(X,\mathcal U)$ be a uniform space. Suppose that there exists an infinite cardinal $\tau$ such that $\bigcap_{i\in I}\eps_i \in \mathcal U$ for any family of entourages $\{\eps_i:\ i\in I\}\subseteq \mathcal U$ with $|I|\leq \tau.$ Then, \ben \item the intersection of  any family of cardinality at most $ \tau$ of open subsets of $F(X,\mathcal U)$ is open. In particular,  $F(X,\mathcal U)$ is a $P$-group and we have $F(X,\mathcal U)=F_ {\sna}(X,\mathcal U).$ \item if the uniform space $(X,\mathcal U)$ is also $\tau$-narrow then $$F(X,\mathcal U)=F_ {\sna}(X,\mathcal U)=F^b_ {\sna}(X,\mathcal U).$$\een
		\end{thm}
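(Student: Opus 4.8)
\textbf{The plan is to} prove both parts by leveraging the descriptions of the relevant local bases given in Theorem \ref{thm:desc}, together with the structural consequences of the countable-intersection-type hypothesis on $\mathcal U$. The key observation driving part (1) is that the hypothesis $\bigcap_{i\in I}\eps_i\in\mathcal U$ for $|I|\le\tau$ allows us to control intersections of the basic neighborhoods $[(V_{\psi_n})]$ of the identity in $F(X,\mathcal U)$.

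First I would establish that the intersection of at most $\tau$ open subsets of $F(X,\mathcal U)$ is open. Since openness is a local, translation-invariant property, it suffices to show that for any family $\{[(V_{\psi_n^{(i)}})]\}_{i\in I}$ of basic identity neighborhoods with $|I|\le\tau$, the intersection contains a basic identity neighborhood. For each $i$ and each $n$, the set $V_{\psi_n^{(i)}}$ is built from entourages $\psi_n^{(i)}(w)\in\mathcal U$ indexed by $w\in F(X)$. The idea is to form, for each $w$, the intersection over all relevant $i$ and $n$ of the entourages $\psi_n^{(i)}(w)$; by hypothesis this intersection lies in $\mathcal U$ provided the index set has cardinality at most $\tau$, which must be checked by a cardinality bookkeeping argument. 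This yields a single sequence $(\psi_n)$ with $[(V_{\psi_n})]\subseteq\bigcap_i[(V_{\psi_n^{(i)}})]$, proving the intersection is open. That $F(X,\mathcal U)$ is then a $P$-group is the case $\tau=\aleph_0$ (or follows directly since countable intersections are covered), and being a $P$-group it is non-archimedean by Lemma \ref{lem:na}. By Remark \ref{rem:nar}, a non-archimedean $F(X,\mathcal U)$ coincides with $F_{\sna}(X,\mathcal U)$, giving the first claim.

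For part (2), I would invoke Lemma \ref{lem:bala}: the group $F(X,\mathcal U)$ satisfies that intersections of at most $\tau$ open sets are open (from part (1)), so if it is also $\tau$-bounded, then it is balanced. The remaining work is to deduce $\tau$-boundedness of $F(X,\mathcal U)$ from $\tau$-narrowness of $(X,\mathcal U)$. Here the natural route is to show that a $\tau$-narrow uniform space embeds into $F(X,\mathcal U)$ in a way that makes the ambient group $\tau$-bounded: given a basic neighborhood $[(V_{\psi_n})]$, one covers $X$ by at most $\tau$ sets of the form $\eps[x_\alpha]$ using narrowness, and then shows that finitely many products of the corresponding generators, translated by a set of words of cardinality at most $\tau$, cover $F(X)$. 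Once balancedness is established, $F(X,\mathcal U)$ is a non-archimedean balanced group, so by Remark \ref{rem:nar} it coincides with $F^b_{\sna}(X,\mathcal U)$ as well, and combined with part (1) all three groups agree.

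\textbf{The main obstacle} I anticipate is the $\tau$-boundedness step in part (2): translating the covering $\{\eps[x_\alpha]\}_{\alpha<\tau}$ of the space $X$ into a covering of the free group $F(X)$ by $\tau$-many translates of a basic neighborhood. The difficulty is that elements of $F(X)$ are arbitrary-length reduced words, so one must argue that modulo a basic neighborhood $[(V_{\psi_n})]$ only finitely many word-lengths matter at each stage, and that within each length the narrowness of $X$ controls the number of cosets. Making the cardinality count come out to exactly $\tau$ — rather than $\tau^{\aleph_0}$ or larger — is the delicate point, and this is presumably where the precise form of Pestov's description of the topology in Theorem \ref{thm:desc}(1), together with the $\tau$-narrowness hypothesis, must be used carefully.
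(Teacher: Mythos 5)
Your part (1) is essentially the paper's own argument: for each $w\in F(X)$ you intersect the entourages $\psi_n^{(i)}(w)$ over the index set $I\times\N$, whose cardinality is at most $\tau\cdot\aleph_0=\tau$, obtaining a single function $\varphi\in\mathcal U^{F(X)}$ with $[\mathcal{V}_{\varphi}]\subseteq\bigcap_{i\in I}[(V_{\psi_n^{(i)}})]$; Pestov's description (Theorem \ref{thm:desc}.1) together with Remark \ref{rem:const} shows $[\mathcal{V}_{\varphi}]$ is an identity neighborhood, and Lemma \ref{lem:na} with Remark \ref{rem:nar} then yields $F(X,\mathcal U)=F_{\sna}(X,\mathcal U)$. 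This matches the paper step for step.

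Part (2), however, has a genuine gap, and it sits exactly where you flagged it: the implication ``$(X,\mathcal U)$ is $\tau$-narrow $\Rightarrow F(X,\mathcal U)$ is $\tau$-bounded'' is never proved, only sketched as a hoped-for strategy. The paper closes this step by citation: the universal arrow $i:(X,\mathcal U)\to F(X,\mathcal U)$ is a uniform embedding whose image algebraically generates the group, and a topological group algebraically generated by a $\tau$-narrow subspace is $\tau$-bounded \cite[Theorem 5.1.19]{AT}; after that, your appeal to Lemma \ref{lem:bala} and Remark \ref{rem:nar} finishes the proof exactly as the paper does. Moreover, the obstacle you anticipate (a count of $\tau^{\aleph_0}$, requiring fine control of Pestov's neighborhoods) does not actually arise: the boundedness argument is purely group-theoretic and never uses the free group structure or Theorem \ref{thm:desc}. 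Set $Y=i(X)\cup i(X)^{-1}\cup\{e\}$; since $i$ is a uniform embedding of a $\tau$-narrow space with respect to the (inversion-invariant) two-sided uniformity, $Y$ is covered by at most $\tau$ left translates of any identity neighborhood. One then shows by induction on $n$ that the same holds for each $Y^{n}$: given a neighborhood $U$, pick a symmetric $V$ with $V^{2}\subseteq U$, write $Y\subseteq F_{0}V$ with $|F_{0}|\le\tau$, and for each $b\in F_{0}$ apply the inductive hypothesis to the neighborhood $bVb^{-1}$ to get $Y^{n-1}\subseteq F_{b}\,(bVb^{-1})$ with $|F_{b}|\le\tau$; then
$$Y^{n}=Y^{n-1}Y\subseteq\bigcup_{b\in F_{0}}Y^{n-1}bV\subseteq\bigcup_{b\in F_{0}}F_{b}\,bV^{2}\subseteq\Bigl(\bigcup_{b\in F_{0}}F_{b}\,b\Bigr)U,$$
a union of at most $\tau\cdot\tau=\tau$ left translates. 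Finally $F(X)=\bigcup_{n\in\N}Y^{n}$ costs only a factor of $\aleph_0$, so $F(X,\mathcal U)$ is $\tau$-bounded. In short: your skeleton for part (2) is the right one, but its central cardinality step is missing, and it is filled either by the citation the paper uses or by the elementary induction above.
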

\begin{proof}
$(1):$ Let $I$ be an arbitrary set  with $|I|\leq \tau.$ For every $i\in I$	let $\{\psi^i_n\}$  be a sequence of elements from $\mathcal U^{F(X)}$ (see Theorem \ref{thm:desc}.1). We define a function $\varphi$ as follows. For every $w\in F(X)$ let
	$\varphi(w)=\bigcap_{i\in I, n\in \N}\psi^i_n(w).$ By our assumption on the cardinal $\tau,$ we have $\varphi\in \mathcal U^{F(X)}.$ \\Clearly, $\mathcal{V}_{\varphi}\subseteq V_{\psi^i_{n}} \  \forall i\in I, \forall n\in \N.$ It follows that $[\mathcal{V}_{\varphi}]\subseteq \bigcap_{i\in I}[(V_{\psi^i_{n}})].$ By \cite[Theorem 2]{pes85} (see also Theorem \ref{thm:desc}.1) and Remark \ref{rem:const},  $[\mathcal{V}_{\varphi}]$ is a neighborhood of the identity of $F(X,\mathcal U)$. It follows that   the intersection of  any family of cardinality at most $\tau$ of open subsets of $F(X,\mathcal U)$ is open. Therefore, $F(X,\mathcal U)$ is a $P$-group. By Lemma \ref{lem:na} and Remark \ref{rem:nar}, $F(X,\mathcal U)=F_ {\sna}(X,\mathcal U).$\\
	$(2)$: It is known that the universal morphism  $i:(X,\mathcal U)\to F(X,\mathcal U)$ is a uniform embedding and that $i(X)$ algebraically generates  $F(X,\mathcal U).$
	Since $(X,\mathcal U)$ is $\tau$-narrow, we obtain by \cite[Theorem 5.1.19]{AT} (see also \cite[Exercise 5.1.a]{AT}) that $F(X,\mathcal U)$ is $\tau$-bounded. By item $(1)$ and Lemma
	\ref{lem:bala}, we conclude that the non-archimedean group $F(X,\mathcal U)$ is also balanced and so we have $F(X,\mathcal U)=F_ {\sna}(X,\mathcal U)=F^b_ {\sna}(X,\mathcal U).$
\end{proof}	

Omitting the $\tau$-narrowness assumption from Theorem \ref{thm:coin}.2, we obtain the following counterexample.
\begin{example}
	By \cite[Example 3.14]{NT}, for every cardinal $\tau>\aleph_1,$ there exists a Hausdorff uniform $P$-space such that $w(X,\mathcal U)=\aleph_1<\tau<\chi(F(X,\mathcal U)).$ In view of Theorem \ref{thm:coin}.1  and \cite[Theorem 4.16.1]{MS13}, we have $F(X,\mathcal U)=F_ {\sna}(X,\mathcal U)\neq F^b_ {\sna}(X,\mathcal U).$
\end{example}

As corollaries we obtain the following two results of Nickolas and Tkachenko.
\begin{corol}
\cite[Lemma 3.12]{NT} If $(X,\mathcal U)$ is an $\aleph_0$-narrow uniform $P$-space, then the group $F(X,\mathcal U)$ has a base at the identity consisting of open normal subgroups.  
\end{corol}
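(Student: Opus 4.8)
The plan is to recognize this corollary as the instance $\tau=\aleph_0$ of Theorem \ref{thm:coin}, read off against the explicit base described in Theorem \ref{thm:desc}.2(b). First I would check the two hypotheses of Theorem \ref{thm:coin} with $\tau=\aleph_0$. By the definition of a uniform $P$-space, any countable intersection of entourages again lies in $\mathcal U$, which is exactly the closure assumption required; and the standing $\aleph_0$-narrowness is precisely $\tau$-narrowness for $\tau=\aleph_0$. (By Lemma \ref{lem:pisna}, $(X,\mathcal U)$ is moreover non-archimedean, so the free objects $F_{\sna}(X,\mathcal U)$ and $F^b_{\sna}(X,\mathcal U)$ are defined in the first place.)

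Applying Theorem \ref{thm:coin}.2 then yields the chain $F(X,\mathcal U)=F_{\sna}(X,\mathcal U)=F^b_{\sna}(X,\mathcal U)$, so $F(X,\mathcal U)$ carries the topology of the free non-archimedean balanced group over $(X,\mathcal U)$. By Theorem \ref{thm:desc}.2(b), the family $\{[\tilde{\eps}]:\ \eps\in\mathcal B\}$ is a local base at the identity of $F^b_{\sna}(X,\mathcal U)$ consisting of normal subgroups; each member, being a subgroup that is also a neighborhood of the identity, is open. Transporting this base across the identification $F(X,\mathcal U)=F^b_{\sna}(X,\mathcal U)$ furnishes a base at the identity of $F(X,\mathcal U)$ made up of open normal subgroups, which is exactly what is claimed.

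I do not expect a genuine obstacle here: the entire weight of the argument has already been loaded into Theorem \ref{thm:coin} and Theorem \ref{thm:desc}, and what remains is the bookkeeping of matching $\tau=\aleph_0$ to the two hypotheses. The only point meriting a word of care is the verification that the members of $\{[\tilde{\eps}]\}$ are genuinely open subgroups, which follows from the standard fact that a subgroup containing a neighborhood of the identity is open, together with the observation that $[\tilde{\eps}]$ is itself such a neighborhood.
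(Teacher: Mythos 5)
Your proof is correct and is essentially the paper's own argument: apply Theorem \ref{thm:coin}.2 with $\tau=\aleph_0$ to get $F(X,\mathcal U)=F^b_{\sna}(X,\mathcal U)$, then invoke the base of (open) normal subgroups from Theorem \ref{thm:desc}.2(b). Your added verifications of the hypotheses and of openness of the subgroups $[\tilde{\eps}]$ are sound but routine.
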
	
\begin{proof}
By Theorem \ref{thm:coin}.2,  $F(X,\mathcal U)=F^b_ {\sna}(X,\mathcal U).$  Now use item $(b)$ of Theorem \ref{thm:desc}.2.
\end{proof}
\begin{corol}\cite[Theorem 3.13]{NT} If $(X,\mathcal U)$ is an $\aleph_0$-narrow uniform $P$-space, then $\chi(F(X,\mathcal U))=w(X,\mathcal U).$
\end{corol}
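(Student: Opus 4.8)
The plan is to prove the two inequalities $\chi(F(X,\mathcal U))\le w(X,\mathcal U)$ and $w(X,\mathcal U)\le \chi(F(X,\mathcal U))$ separately, after which the claimed equality follows immediately.

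For the upper bound I would first invoke Theorem \ref{thm:coin}.2 with $\tau=\aleph_0$: since $(X,\mathcal U)$ is an $\aleph_0$-narrow uniform $P$-space, we have $F(X,\mathcal U)=F^b_{\sna}(X,\mathcal U)$, so it suffices to exhibit a small local base at the identity of the free non-archimedean balanced group. By Lemma \ref{lem:pisna} the space $(X,\mathcal U)$ is non-archimedean, and I would observe that one can always extract a base $\mathcal B$ consisting of equivalence relations with $|\mathcal B|=w(X,\mathcal U)$: starting from any base of minimal cardinality, replace each of its members by an equivalence relation of $\mathcal U$ contained in it (which exists by non-archimedeanness), and note that the resulting family is still a base, since any $\gamma\in\mathcal U$ contains some original member, hence the equivalence relation chosen inside it. Feeding this $\mathcal B$ into Theorem \ref{thm:desc}.2(b) yields the local base $\{[\tilde{\eps}]:\ \eps\in\mathcal B\}$ at the identity of $F^b_{\sna}(X,\mathcal U)$, whence $\chi(F(X,\mathcal U))\le|\mathcal B|=w(X,\mathcal U)$.

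For the lower bound I would use the general fact that the weight of the two-sided uniformity of any infinite topological group $G$ equals $\chi(G)$: a base of this uniformity is obtained from a local base $\{U\}$ at the identity by forming the entourages $\{(x,y):x^{-1}y\in U\}\cap\{(x,y):xy^{-1}\in U\}$, while conversely the section $\eps[e]$ of any entourage $\eps$ is a neighborhood of the identity, so the two cardinals coincide. Since the canonical map $i\colon(X,\mathcal U)\to F(X,\mathcal U)$ is a uniform embedding (as recalled in the proof of Theorem \ref{thm:coin}.2), $(X,\mathcal U)$ is uniformly isomorphic to the subspace $i(X)$, and restricting a base of the group's uniformity to $i(X)$ produces a base of $\mathcal U$; hence $w(X,\mathcal U)\le w(F(X,\mathcal U))=\chi(F(X,\mathcal U))$. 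Combining the two inequalities gives the equality.

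The routine parts are the uniform-space bookkeeping; the only points requiring a little care are the claim that a non-archimedean uniform space admits a base of equivalence relations whose cardinality equals its weight, and the standard identification of the weight of the two-sided uniformity with the character of the group. The essential structural input, and the step I expect to be doing all the real work, is Theorem \ref{thm:coin}.2: it collapses $F(X,\mathcal U)$ onto the balanced free object $F^b_{\sna}(X,\mathcal U)$ and thereby supplies, via Theorem \ref{thm:desc}.2(b), the explicit base of open normal subgroups indexed by $\mathcal B$ on which the upper bound rests.
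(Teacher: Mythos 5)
Your proposal is correct, and its first (and decisive) step is the same as the paper's: invoke Theorem \ref{thm:coin}.2 to get $F(X,\mathcal U)=F^b_{\sna}(X,\mathcal U)$. Where you diverge is in the second step: the paper finishes in one line by citing \cite[Theorem 4.16.1]{MS13}, which states precisely that $\chi(F^b_{\sna}(X,\mathcal U))=w(X,\mathcal U)$, whereas you reprove that equality from ingredients available in the paper. Your upper bound (extract a base $\mathcal B$ of equivalence relations with $|\mathcal B|=w(X,\mathcal U)$ via Lemma \ref{lem:pisna}, then feed it into Theorem \ref{thm:desc}.2(b)) is fine, though it silently uses the monotonicity $\delta\subseteq\eps\Rightarrow[\tilde\delta]\subseteq[\tilde\eps]$ to pass from the family indexed by \emph{all} equivalence relations in $\mathcal U$ (which is what $\mathcal B$ denotes in Theorem \ref{thm:desc}.2(b), following \cite{MS13}) to the subfamily indexed by your smaller base; that check is routine but should be said. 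Your lower bound (the weight of the two-sided uniformity equals $\chi$, plus the fact that $i$ is a uniform embedding, so $w(X,\mathcal U)\le w(F(X,\mathcal U))=\chi(F(X,\mathcal U))$) is also correct, and is made even easier here because the group is balanced. The trade-off: the paper's proof is shorter but outsources all content to \cite{MS13}; yours is self-contained modulo the results actually stated in this paper, at the cost of redoing (a special case of) the character computation of \cite[Theorem 4.16.1]{MS13}.
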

\begin{proof}
If $(X,\mathcal U)$ is an $\aleph_0$-narrow uniform $P$-space, then by Theorem \ref{thm:coin}.2 we have $F(X,\mathcal U)=F^b_ {\sna}(X,\mathcal U).$ On the other hand, by \cite[Theorem 4.16.1]{MS13},  we have $\chi (F^b_ {\sna}(X,\mathcal U))=w(X,\mathcal U).$  We conclude that $\chi(F(X,\mathcal U))=w(X,\mathcal U).$\end{proof}
\section{The subsets $B_n$}\label{sec:last}
\begin{thm}\label{thm:bn}
 Suppose that $\tau$ is a topological $P$-group topology on a free group $F(X)$  and $\mu$ is  either the right, left or  two-sided uniformity of $(F(X),\tau).$ Then, $\eps\in \mu$ if (and only if) $\eps\cap (B_n\times B_n)\in \mu \cap (B_n\times B_n) \ \forall n\in \N.$
\end{thm}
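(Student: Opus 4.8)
The ``only if'' part is immediate from the definition of a trace, so I concentrate on the converse. The plan is to turn the local data on the subsets $B_n$ into a single entourage of $\mu$ lying below $\eps$; as $\mu$ is a filter, producing one member of $\mu$ contained in $\eps$ already forces $\eps\in\mu$. Since $(F(X),\tau)$ is a $P$-group, Lemma \ref{lem:na} makes it non-archimedean, so the identity has a base of open subgroups. Writing $H_L=\{(x,y)\in F(X)^2: x^{-1}y\in H\}$ and $H_R=\{(x,y)\in F(X)^2: xy^{-1}\in H\}$ for an open subgroup $H$, the left uniformity $\mathcal L_{F(X)}$ has the base $\{H_L\}$, the right uniformity $\mathcal R_{F(X)}$ the base $\{H_R\}$, and the two-sided uniformity the base $\{H_L\cap H_R\}$, all indexed by open subgroups $H$ (and all consisting of equivalence relations). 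I will write the argument for $\mu=\mathcal L_{F(X)}$; the other two cases are identical after replacing $H_L$ by $H_R$, respectively by $H_L\cap H_R$.

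First I would extract subgroups from the hypothesis. Fix $n\in\N$. Because $\eps\cap(B_n\times B_n)$ belongs to the trace of $\mu$ on $B_n$, there is $\delta_n\in\mu$ with $\delta_n\cap(B_n\times B_n)=\eps\cap(B_n\times B_n)$; choosing a basic entourage below $\delta_n$ yields an open subgroup $H_n$ with $(H_n)_L\subseteq\delta_n$. Intersecting with $B_n\times B_n$ gives the local statement I need: if $x,y\in B_n$ and $x^{-1}y\in H_n$, then $(x,y)\in\eps$.

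Finally I would assemble a global subgroup by setting $H:=\bigcap_{n\in\N}H_n$. Being a countable intersection of open subgroups of a $P$-group, $H$ is again an open subgroup, so $H_L\in\mu$. To check $H_L\subseteq\eps$, take any $(x,y)$ with $x^{-1}y\in H$ and pick $n=\max\{|x|,|y|\}$, where $|\cdot|$ denotes word length, so that $x,y\in B_n$; then $x^{-1}y\in H\subseteq H_n$, whence $(x,y)\in\eps$ by the local statement. Thus $H_L\subseteq\eps$ and $\eps\in\mu$, completing the left case, and the right and two-sided cases run verbatim with $H_R$ and $H_L\cap H_R$.

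The two steps that actually carry the proof are that a countable intersection of open subgroups of the $P$-group $F(X)$ is still an open subgroup (the defining $P$-property), and that $F(X)=\bigcup_{n\in\N} B_n$ with any two elements lying together in a single $B_n$, so that the information gathered pointwise on the $B_n$ globalises. I do not anticipate a serious obstacle beyond phrasing the three parallel uniformities uniformly; the passage between entourages and open subgroups is precisely the non-archimedean structure handed down by Lemma \ref{lem:na}.
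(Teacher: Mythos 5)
Your proof is correct and follows essentially the same strategy as the paper's: extract entourages $\delta_n$ from the hypothesis on each $B_n$, intersect countably many of them using the $P$-property, and globalize through the exhaustion $F(X)\times F(X)=\bigcup_{n\in\N}(B_n\times B_n)$ to land below $\eps$. The only difference is cosmetic: the paper intersects the $\delta_n$ directly, observing that $(F(X),\mu)$ is itself a uniform $P$-space, whereas you first shrink each $\delta_n$ to a subgroup entourage $(H_n)_L$ via Lemma \ref{lem:na} and intersect the subgroups instead --- an extra reduction the argument does not actually need.
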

\begin{proof}
It is clear that if $(F(X),\tau)$ is a $P$-group, then $(F(X),\mu)$ is a uniform $P$-space.
Let us assume that $\eps\cap (B_n\times B_n)\in \mu \cap (B_n\times B_n) \ \forall n\in \N.$ Then, for every $n\in \N$ there exists $\delta_n\in \mathcal U$ such that  $\eps\cap (B_n\times B_n)=\delta_n\cap (B_n\times B_n).$ Since $(F(X),\mu)$ is a uniform $P$-space, we have $\delta=\cap_{n\in \N}\delta_n\in \mu.$ Hence, $$\delta= \bigcup_{n\in \N}(\delta \cap (B_n\times B_n))\subseteq \bigcup_{n\in \N}(\delta_n \cap (B_n\times B_n))=\bigcup_{n\in \N}(\eps\cap (B_n\times B_n))=\eps,$$  and we conclude that $\eps\in \mu.$
\end{proof}
\begin{corol}\label{cor:bns}
	Let $(X,\mathcal U)$ be a uniform $P$-space and $\mu$ be either the right, left or two-sided uniformity of  $F(X,\mathcal U).$ Then, $\eps\in \mu$ if (and only if) $$\eps\cap (B_n\times B_n)\in \mu \cap (B_n\times B_n) \ \forall n\in \N.$$
\end{corol}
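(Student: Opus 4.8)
The plan is to derive Corollary \ref{cor:bns} directly from Theorem \ref{thm:bn}, since the corollary is simply the specialization of the theorem to the case where the $P$-group topology $\tau$ arises as the topology of the uniform free topological group $F(X,\mathcal U)$ over a uniform $P$-space $(X,\mathcal U)$. The essential observation is that $F(X,\mathcal U)$ is the free group $F(X)$ equipped with a specific group topology, and $\mu$ is one of its three natural uniformities (left, right, or two-sided); so to invoke Theorem \ref{thm:bn} it suffices to verify that this topology is a $P$-group topology.

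First I would note that $(X,\mathcal U)$ is a uniform $P$-space, which by hypothesis means $\mathcal U$ is closed under countable intersections. This is precisely the case $\tau=\aleph_0$ of the hypothesis of Theorem \ref{thm:coin}: the intersection of any countable family of entourages lies in $\mathcal U$. Then Theorem \ref{thm:coin}.1 applies and tells us that the intersection of any countable family of open subsets of $F(X,\mathcal U)$ is open; equivalently, $F(X,\mathcal U)$ is a $P$-group. Thus the topology $\tau$ of $F(X,\mathcal U)$ is a topological $P$-group topology on the free group $F(X)$, and $\mu$, being the left, right, or two-sided uniformity of $(F(X),\tau)$, satisfies exactly the hypotheses of Theorem \ref{thm:bn}.

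Having established that $F(X,\mathcal U)$ is a $P$-group, the conclusion of the corollary is immediate: applying Theorem \ref{thm:bn} with this $\tau$ and $\mu$ gives that $\eps\in \mu$ if and only if $\eps\cap (B_n\times B_n)\in \mu\cap(B_n\times B_n)$ for every $n\in \N$, which is the desired statement. I do not anticipate a genuine obstacle here, since the corollary is essentially a relabeling of the theorem once the $P$-group property of $F(X,\mathcal U)$ is in hand. The only point requiring a moment of care is confirming that $F(X,\mathcal U)$ is genuinely a $P$-group rather than merely a non-archimedean group; this is exactly what Theorem \ref{thm:coin}.1 supplies, so I would cite it explicitly as the single nontrivial input and let Theorem \ref{thm:bn} do the rest.
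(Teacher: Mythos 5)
Your proof is correct and follows exactly the paper's own argument: invoke Theorem \ref{thm:coin}.1 (with $\tau=\aleph_0$, which is precisely the uniform $P$-space hypothesis) to conclude that $F(X,\mathcal U)$ is a $P$-group, and then apply Theorem \ref{thm:bn}. Your write-up just spells out the specialization to $\tau=\aleph_0$ a bit more explicitly than the paper does.
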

\begin{proof}
By Theorem \ref{thm:coin},  $F(X,\mathcal U)$	 is a $P$-group. Now the proof follows from Theorem \ref{thm:bn}.
\end{proof}
\begin{corol}
Let $(X,\mathcal U)$ be a uniform $P$-space. Then, $F(X,\mathcal U)$ is balanced if and only if $B_n$ is balanced for every $n\in \N.$
\end{corol}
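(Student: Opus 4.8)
The plan is to obtain the statement as a direct consequence of Corollary \ref{cor:bns} together with the definition of a balanced subset. Write $G:=F(X,\mathcal U)$. First I would observe that each $B_n$ really is a symmetric subset of $G$ in the sense of Definition \ref{def:bal}: in a free group the inverse of a reduced word of length at most $n$ again has length at most $n$, and the empty word (the identity) lies in every $B_n$. Hence ``$B_n$ is balanced'' means precisely $\mathcal L_G^{B_n}=\mathcal R_G^{B_n}$, and ``$G$ is balanced'' means $\mathcal L_G=\mathcal R_G$. Note also that by Theorem \ref{thm:coin} the group $G$ is a $P$-group, so Corollary \ref{cor:bns} is available for $\mu=\mathcal L_G$ and for $\mu=\mathcal R_G$ alike.

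The forward implication is immediate. If $G$ is balanced, then $\mathcal L_G=\mathcal R_G$, and taking the trace of this common uniformity on any $B_n$ yields $\mathcal L_G^{B_n}=\mathcal R_G^{B_n}$; thus every $B_n$ is balanced.

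For the converse I would assume $\mathcal L_G^{B_n}=\mathcal R_G^{B_n}$ for all $n\in\N$ and prove $\mathcal L_G\subseteq \mathcal R_G$; the reverse inclusion then follows by interchanging the roles of the left and right uniformities. Fix $\eps\in\mathcal L_G$. For each $n$, the set $\eps\cap(B_n\times B_n)$ is, by the very definition of the trace, a member of $\mathcal L_G^{B_n}$; by the standing hypothesis this trace coincides with $\mathcal R_G^{B_n}$, so $\eps\cap(B_n\times B_n)\in\mathcal R_G^{B_n}$ for every $n$. Applying Corollary \ref{cor:bns} with $\mu=\mathcal R_G$ then gives $\eps\in\mathcal R_G$. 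Hence $\mathcal L_G\subseteq\mathcal R_G$, and symmetrically $\mathcal R_G\subseteq\mathcal L_G$, so $\mathcal L_G=\mathcal R_G$ and $G$ is balanced.

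I do not expect a genuine obstacle here: the entire argument is a formal manipulation of traces once Corollary \ref{cor:bns} is in hand. The one point deserving care is simply that the countable-intersection property underlying that corollary be invoked for the \emph{right} uniformity as well as the left one, so that the passage from ``$\eps\cap(B_n\times B_n)\in\mathcal R_G^{B_n}$ for all $n$'' to ``$\eps\in\mathcal R_G$'' is licensed; this is exactly what Corollary \ref{cor:bns} delivers uniformly for $\mu\in\{\mathcal L_G,\mathcal R_G\}$, and the real content has already been absorbed into Theorem \ref{thm:bn} and Theorem \ref{thm:coin}.
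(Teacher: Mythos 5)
Your proposal is correct and is exactly the argument the paper intends: the corollary is stated without proof precisely because it follows from Corollary \ref{cor:bns} applied with $\mu=\mathcal L_G$ and $\mu=\mathcal R_G$, together with the trivial observation that traces of equal uniformities coincide. Your write-up simply makes this implicit derivation explicit, including the correct care about invoking Corollary \ref{cor:bns} for the right uniformity.
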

\begin{thm}\label{thm:last}
Let $(X,\mathcal U)$ be a uniform $P$-space. The following are equivalent:
\ben 
\item  $B_n$ is strongly functionally balanced for every $n\in \N.$
\item $B_n$ is  functionally balanced for every $n\in \N.$
\item $F(X,\mathcal U)$ is functionally balanced.
\item $F(X,\mathcal U)$ is strongly functionally balanced.
\een
\end{thm}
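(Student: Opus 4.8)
The plan is to set $G:=F(X,\mathcal U)$, reduce the four-way equivalence to a single implication pair, and then bridge the ``local'' sets $B_n$ and the ``global'' group $G$ by an extension argument in one direction and a gluing argument in the other. First I would record the two facts that make the reduction free of charge: by Theorem \ref{thm:coin} the group $G$ is a $P$-group, and each $B_n$ is a symmetric subset of $G$. Applying Theorem \ref{thm:equiv} to each $B_n$ gives $(1)\Leftrightarrow(2)$, while Corollary \ref{cor:funbal} gives $(3)\Leftrightarrow(4)$. Hence it suffices to prove the single equivalence $(2)\Leftrightarrow(3)$, namely that $G$ is functionally balanced if and only if every $B_n$ is.

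For $(3)\Rightarrow(2)$ I would extend functions coset-wise. Fix $n$ and a bounded $\mathcal L_G^{B_n}$-uniformly continuous $f:B_n\to\R$. By Corollary \ref{cor:lco} there is an open subgroup $H\le G$ on which $f$ is constant along each nonempty trace $B_n\cap xH$. Define $\hat f:G\to\R$ by assigning to each left coset $xH$ meeting $B_n$ the common value of $f$ there, and the value $0$ to every other coset. Then $\hat f$ is bounded, extends $f$, and is constant on each left coset of the open subgroup $H$, so $\hat f$ is $\mathcal L_G$-uniformly continuous by Corollary \ref{cor:lco}. Since $G$ is functionally balanced, $\hat f$ is $\mathcal R_G$-uniformly continuous, and restricting to $B_n$ shows that $f$ is $\mathcal R_G^{B_n}$-uniformly continuous.

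The implication $(2)\Rightarrow(3)$ is the heart of the proof and is where Corollary \ref{cor:bns} and the cardinal $\mathfrak c$ enter. Given a bounded $\mathcal L_G$-uniformly continuous $f:G\to\R$, form the level-set equivalence relation $\eps:=\{(t,s)\in G^2: f(t)=f(s)\}$. By Corollary \ref{cor:lco} there is an open subgroup of $G$ whose left-coset relation is contained in $\eps$, and since a uniformity is closed under supersets this gives $\eps\in\mathcal L_G$. The key observation is that for each $n$ the restriction $\eps\cap(B_n\times B_n)$ is an equivalence relation in $\mathcal L_G^{B_n}$ whose classes are in bijection with the values of $f$ on $B_n$, hence number at most $|\R|=\mathfrak c$. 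Because $B_n$ is functionally balanced, condition $(3)$ of Theorem \ref{thm:equiv} applies and yields $\eps\cap(B_n\times B_n)\in\mathcal R_G^{B_n}$ for every $n$. By Corollary \ref{cor:bns} applied with $\mu=\mathcal R_G$ we conclude $\eps\in\mathcal R_G$, and then Lemma \ref{lem:punif} shows $f$ is $\mathcal R_G$-uniformly continuous, so $G$ is functionally balanced.

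The main obstacle I anticipate is matching the local balance of the pieces $B_n$ to the global balance of $G$. The gluing lemma (Corollary \ref{cor:bns}) is what permits assembling the traces $\eps\cap(B_n\times B_n)$ into a genuine right entourage $\eps$ on all of $G$, but to invoke the functional balance of each $B_n$ one must verify the $\mathfrak c$-bound on the number of equivalence classes. The point that makes everything fit is that a real-valued function on $B_n$ takes at most $\mathfrak c$ distinct values, which is exactly the cardinality threshold appearing in Theorem \ref{thm:equiv}(3); this is what allows condition $(3)$ to be applied uniformly in $n$ and is the crucial place where the real-valuedness of the functions is used.
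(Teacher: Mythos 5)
Your proposal is correct, and one half of it genuinely diverges from the paper. The reduction to $(2)\Leftrightarrow(3)$ and your argument for $(2)\Rightarrow(3)$ --- form the level-set relation of $f$, restrict it to $B_n$, note it has at most $\mathfrak c$ classes there, invoke Theorem \ref{thm:equiv} locally, and glue with Corollary \ref{cor:bns} --- are essentially the paper's own proof of that implication. For $(3)\Rightarrow(2)$, however, the paper stays in the language of equivalence relations: given an equivalence relation $\varepsilon_n=\varepsilon\cap(B_n\times B_n)\in\mathcal L_G^n$ with at most $\mathfrak c$ classes, it extends it to
$\delta:=\varepsilon\cup\bigl((F(X)\setminus \varepsilon[B_n])\times(F(X)\setminus \varepsilon[B_n])\bigr)$,
i.e.\ it adjoins the complement of the $\varepsilon$-saturation of $B_n$ as one extra class, and must then verify transitivity of $\delta$, that its trace on $B_n$ equals $\varepsilon_n$, and that it still has at most $\mathfrak c$ classes, before applying Theorem \ref{thm:equiv} to $G$. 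Your coset-wise extension $\hat f$ of the function itself --- well defined precisely because Corollary \ref{cor:lco} (the $P$-group hypothesis) makes $f$ constant on the traces $B_n\cap xH$, with the value $0$ outside playing the role of the paper's extra class --- is the dual move, and it buys a shorter argument: no transitivity check, no cardinality count in this direction, and a direct appeal to Definition \ref{def:bal} rather than to the equivalence-relation criterion. The paper's version keeps both directions inside the entourage machinery of Theorem \ref{thm:equiv}, making the proof symmetric; yours is more elementary in the direction $(3)\Rightarrow(2)$. Both are complete and correct.
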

\begin{proof}
$G:=F(X,\mathcal U)$ is a $P$-group by Theorem \ref{thm:coin}. So, the implications $(1)\iff (2)$ and $(3)\iff (4)$ can be derived from  Theorem \ref{thm:equiv}. \\
$(2)\Rightarrow (3):$  Using Theorem \ref{thm:equiv}, it suffices to show that 
if   $\eps\in \mathcal L_G$ with at most
$\mathfrak c$ equivalence classes, then $\eps\in \mathcal R_G.$ For such an $\eps$ it is clear that $\eps_n:=\eps\cap (B_n\times B_n)$ has at most
$\mathfrak c$ equivalence classes in $B_n.$ It follows from our assumption $(2)$ and from  Theorem \ref{thm:equiv} (with $B=B_n$) that $\eps_n\in \mathcal R_G^{n}.$  Corollary \ref{cor:bns} implies that $\eps\in \mathcal R_G,$ as needed.\\
$(3)\Rightarrow (2):$ Suppose that $F(X,\mathcal U)$ is functionally balanced and fix an arbitrary $n\in \N.$ We  show that $B_n$ is functionally balanced. 
Let $\eps_n:=\eps\cap (B_n\times B_n)\in \mathcal L^n_G$ be an equivalence relation with  at most
$\mathfrak c$ equivalence classes in $B_n,$ where $\eps\in \mathcal L_G$ is  an equivalence relation on $F(X).$ Let $$\delta:= \eps\cup ((F(X)\setminus \eps[B_n])\times (F(X)\setminus \eps[B_n])).$$ We will show that $\delta$ has the following properties:\ben[(a)] \item
$\delta$ is an equivalence relation with $\eps\subseteq \delta.$
\item $\delta_n=\delta\cap (B_n\times B_n)=\eps_n.$
\item $\delta$ has at most $\mathfrak c$ equivalence classes in $F(X).$
\een
We prove the nontrivial part of $(a)$. Namely, the transitivity of $\delta.$
Let $(x,y),(y,z)\in \delta.$ If $(x,y), (y,z)\in \eps$ or $(x,y),(y,z)\in 
 (F(X)\setminus \eps[B_n])\times (F(X)\setminus \eps[B_n])$ the assertion is trivial. So, without loss of generality assume that $(x,y)\in \eps$ and $(y,z)\in 
 (F(X)\setminus \eps[B_n])\times (F(X)\setminus \eps[B_n]).$  Since $(x,y)\in \eps$ and $y\in (F(X)\setminus \eps[B_n]),$ it follows that $x\in (F(X)\setminus \eps[B_n]).$ Since $z$ is also in $(F(X)\setminus \eps[B_n]),$ we have $(x,z)\in  (F(X)\setminus \eps[B_n])\times (F(X)\setminus \eps[B_n])\subseteq \delta.$\\
 To see that property $(b)$ is satisfied, first observe that $B_n\subseteq \eps[B_n].$ Therefore, $$\delta_n=\delta\cap (B_n\times B_n)= (\eps\cup ((F(X)\setminus \eps[B_n])\times (F(X)\setminus \eps[B_n])))\cap (B_n\times B_n)=$$$$=(\eps\cap (B_n\times B_n))\cup ((F(X)\setminus \eps[B_n])\times (F(X)\setminus \eps[B_n]))\cap (B_n\times B_n))=$$$$=(\eps\cap (B_n\times B_n))\cup \emptyset= \eps_n.$$\\
 To prove $(c)$ combine the following two observations. On the one hand, the fact that $\eps_n$ has at most
 $\mathfrak c$ equivalence classes in $B_n,$ implies that  there exist at most $\mathfrak c$ equivalence relations $\delta[x],$ where $x\in \eps[B_n].$ On the other hand,  by the definition of $\delta,$ the number of equivalence relations $\delta[x],$ with $x\notin   \eps[B_n]$ is less than two. So, $\delta$ has at most $\mathfrak c$ equivalence classes in $F(X).$\\
 Now, using $(a)$ and $(c)$ together with our assumption that $F(X,\mathcal U)$ is functionally balanced, we obtain by Theorem \ref{thm:equiv} that 
 $\delta\in  \mathcal R_G.$ Finally, we use property $(b)$ and Corollary \ref{cor:bns}
to conclude that $\eps_n\in \mathcal R_G^n.$
\end{proof}
\noindent \textbf{Acknowledgments:} I would like to thank A.M. Brodsky,  M.
Megrelishvili, L. Jennings and the referee for  their useful suggestions. This work is   supported by Programma SIR 2014 by MIUR, project
GADYGR, number RBSI14V2LI, cup G22I15000160008, and also by a grant of Israel Science Foundation (ISF 668/13).
\bibliographystyle{amsalpha}

\end{document}